\newtheorem{theorem}{Theorem}[section]
\newtheorem{lem}[theorem]{Lemma}
\newtheorem{coro}[theorem]{Corollary}
\newtheorem{thm}[theorem]{Theorem}
\newtheorem{rem}[theorem]{\rm\textsc{Remark}}
\newtheorem{exam}[theorem]{\rm\textsc{Example}}
\newcommand{\ideal}[1]{\ensuremath{\left\langle #1 \right\rangle}}
\DeclareMathOperator{\Quot}{Quot}
\DeclareMathOperator{\dia}{diag}
\DeclareMathOperator{\GL}{GL}
\DeclareMathOperator{\LT}{LT}
\newcommand{\B}{\mathcal{B}}
\newcommand{\N}{\mathbb{N}}
\newcommand{\F}{\mathbb{F}}
\newcommand{\R}{\mathcal{R}}
\newcommand{\tete}{\text{t\^ete-\`a-t\^ete} }
\newcommand{\ra}{\longrightarrow}
\newcommand{\wt}{\widetilde}
\newcommand{\ol}{\overline}
\newcommand{\hbo}{$\hfill\Diamond$}
\begin{document}
\title{Modular invariants of a vector and a covector for some elementary abelian $p$-groups}
\def\shorttitle{Modular invariants of a vector and a covector for some elementary abelian $p$-groups}

\author{Shan Ren}

\address{School of Mathematics and Statistics, Northeast Normal University, Changchun 130024, China}
\email{rens734@nenu.edu.cn}

\begin{abstract}
Let $\F_p$ be the prime field of order $p>0$ and $G$ be an elementary abelian $p$-group.
For some $n$-dimensional cohyperplane $G$-representations $V$ over $\F_p$, we show that $\F_p[V\oplus V^*]^G$, the invariant ring of a vector and a covector is a complete intersection by exhibiting an explicit generating set (in fact, a SAGBI basis) and exposing all relations among the generators.
\end{abstract}

\date{\today}
\thanks{2020 \emph{Mathematics Subject Classification}. 13A50.}
\keywords{Modular invariants; complete intersections; elementary abelian $p$-groups.}
\maketitle \baselineskip=16pt

\dottedcontents{section}[1.16cm]{}{1.8em}{5pt}
\dottedcontents{subsection}[2.00cm]{}{2.7em}{5pt}

\section{Introduction}
\setcounter{equation}{0}
\renewcommand{\theequation}
{1.\arabic{equation}}
\setcounter{theorem}{0}
\renewcommand{\thetheorem}
{1.\arabic{theorem}}

\noindent Let $k$ be a field of any characteristic $p$, $G$ be a finite group, and $W$ be a finite-dimensional faithful representation of $G$ over $k$. Let $W^*$ denote the dual space of $W$. The action of $G$ on $W^*$ can be extended algebraically to an action of $G$ on the symmetric algebra $k[W]:=S(W^*)$ on $W^*$ defined by $(g\cdot f)(w):=f(g^{-1}\cdot w)$ for all
$g\in G,f\in k[W]$, and $w\in W$. The subalgebra $k[W]^G$ of $k[W]$ consisting of all $G$-invariant elements, called the \textit{invariant ring} of $G$ on $W$, is the main object of study in algebraic invariant theory. We say that $k[W]^G$ is \textit{modular} if $p>0$ divides the order of $G$, otherwise \textit{non-modular}. Compared with the non-modular case, modular invariant rings usually have more complicated structures. See \cite{CW11} and \cite{DK15} for general references to the invariant theory of finite groups.

Computing modular invariants of $p$-groups occupies a central position in modular invariant theory of finite groups.
Lots of works have been done for the cyclic group of order $p$ with its indecomposable representations; see, for example,
\cite{CSW10, Weh13}, and references therein. It is well-known that the difficulty of computing modular invariants of $p$-groups
relies on the order of the group $G$ and the structure of the representation $W$. However, for some special
types of representations, modular invariants of finite $p$-groups with high powers could be computable.
In 2011, Bonnaf\'e and Kemper \cite{BK11} explicitly computed the modular invariant ring of \textit{a vector and a covector}:
$$\F_q[V\oplus V^*]^{U_n(\F_q)}$$
where $U_n(\F_q)$ denotes the group of all upper triangular matrices with diagonal entries all 1 and
$V$ denotes the standard representation of $U_n(\F_q)$. Recently, a few works on modular invariant rings of vectors and covectors
for finite classical groups have been appeared; see \cite{Che14, CW17, CW19, CT19}, and \cite{Che21}.

Motivated by seeking an understanding of the difference between modular invariants of two vectors (\cite{Chu03}) and modular invariants of a vector and a covector, the purpose of this article is to compute the modular invariant rings of a vector and a covector for some elementary abelian $p$-groups. We elucidate the difference between the case of two vectors and the case of one vector and one covector in Remark \ref{rem1.4} below.

Let us consider an elementary abelian $p$-group $G$ of rank $n-1$, generated by $\{g_1,g_2,\dots,g_{n-1}\}$.
Let $V$ be an $n$-dimensional representation of $G$ over the prime field $\F_p$ and $\{y_n, y_{n-1},\dots, y_1\}$ be a given basis of $V$ with respect to which the matrix of $g_i$ has the form $$I_n+a_i\cdot E_{1,i+1},$$ where $a_i\in\F_p^{\times}$ and $E_{1,i+1}$ denotes the $n\times n$-matrix in which the $(1,i+1)$-entry is 1 and other entries are zero.
Choose a basis $\{x_n,x_{n-1},\dots,x_1\}$ for $V^*$ dual to $\{y_n, y_{n-1},\dots, y_1\}$ and identify $\F_p[V\oplus V^*]$ with
the polynomial ring $\F_p[x_n,x_{n-1},\dots,x_1, y_n, y_{n-1},\dots, y_1]$. We take the grevlex ordering on $\F_p[V\oplus V^*]$
with $x_n>x_{n-1}>\cdots>x_1>y_n>y_{n-1}>\cdots>y_1$. Thus for all $i,j\in\{1,2,\dots,n-1\}$
\begin{eqnarray}
g_i\cdot x_n-x_n\in\F_p^{\times}\cdot x_{n-i}, && g_i\cdot x_j-x_j=0; \label{eq2.1}\\
g_i\cdot y_n-y_n=0, & & g_i\cdot y_j-y_{j}\in\F_p\cdot y_n. \label{eq2.2}
\end{eqnarray}

Embedding $\F_p[V]^G$ and $\F_p[V^*]^G$ into subalgebras of
$\F_p[V\oplus V^*]^G$, it follows from \cite[Proposition 16]{Kem96} and \cite[Lemma 2.6.3]{CW11} that 
$$\F_p[V]^G=\F_p[N(x_n),x_{n-1},\dots,x_{1}]\text{ and } \F_p[V^*]^G=\F_p[y_n,N(y_{n-1}),\dots, N(y_1)]$$
both are polynomial algebras, where
\begin{equation}
\label{eq2.3}
N(y_i):=y_i^p-y_n^{p-1}\cdot y_i
\end{equation}
 for all $i=1,2,\dots,n-1$, and
\begin{equation}
\label{eq2.4}
N(x_n):=x_n^{p^{n-1}}+\sum_{i=1}^{n-1} (-1)^i\cdot d_{i,n-1}\cdot x_n^{p^{n-1-i}}
\end{equation}
where $d_{1,n-1},\dots, d_{n-1,n-1}$ denote the Dickson invariants in variables $x_1,\dots,x_{n-1}$; see \cite{Dic11}
for the original reference on Dickson invariants or for example, \cite[Exercise 2]{FS16} and \cite[Section 2]{CT19} for a more modern description. 

The natural pairing of $V$ and $V^*$ gives rise to some $\GL_n(\F_p)$-invariants in $\F_p[V\oplus V^*]$:
\begin{equation}
\label{eq2.5}
u_j:=x_1^{p^j}\cdot y_1+x_2^{p^j}\cdot y_2+\dots+x_n^{p^j}\cdot y_n
\end{equation}
for $j=0,1,\ldots, n-2$. 

The main result of this paper is summarized as follows.

\begin{thm}\label{thm1}
The invariant ring $\F_p[V\oplus V^*]^G$ is a complete intersection generated by
$$\B:=\{x_1,\dots,x_{n-1},N(x_n), y_n,N(y_{n-1}),\dots, N(y_1),u_0,\dots,u_{n-2}\}.$$
Moreover, $\B$ is a SAGBI basis of $\F_p[V\oplus V^*]^G$ with respect to the
grevlex ordering: $x_n>x_{n-1}>\cdots>x_1>y_n>y_{n-1}>\cdots>y_1$.
\end{thm}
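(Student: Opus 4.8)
The plan is to treat the generation and the complete-intersection claims together through the leading-term machinery, with the SAGBI assertion serving as the organizing principle. First I would record the leading monomials under the given grevlex order (whose smallest variable is $y_1$): one checks that $\LM(x_j)=x_j$, $\LM(N(x_n))=x_n^{p^{n-1}}$, $\LM(y_n)=y_n$, $\LM(N(y_j))=y_n^{p-1}y_j$ for $1\le j\le n-1$, and $\LM(u_j)=x_n^{p^j}y_n$ for $0\le j\le n-2$. Because the variables $x_1,\dots,x_{n-1}$ and $y_1,\dots,y_{n-1}$ each occur in only one of these monomials, every relation among the $\LM(\B)$ is carried by the shared variables $x_n,y_n$, and a short lattice computation identifies the toric ideal of such relations as the one generated by the $n-1$ binomials
\[
(x_n^{p^j}y_n)^p=y_n^{p-1}\,(x_n^{p^{j+1}}y_n)\quad(0\le j\le n-3),\qquad (x_n^{p^{n-2}}y_n)^p=y_n^{p}\,x_n^{p^{n-1}}.
\]

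These binomials are the t\^ete-\`a-t\^ete pairs $(u_j^{p},\,y_n^{p-1}u_{j+1})$ and $(u_{n-2}^{p},\,y_n^{p}N(x_n))$, and the core of the argument is to subduct each of them explicitly. The Frobenius identity $u_j^{p}=\sum_{k}x_k^{p^{j+1}}y_k^{p}$ together with $N(y_k)=y_k^{p}-y_n^{p-1}y_k$ and, crucially, the vanishing $N(y_n)=0$ give for every $j$ the clean identity
\[
u_j^{p}-y_n^{p-1}u_{j+1}=\sum_{k=1}^{n-1}x_k^{p^{j+1}}N(y_k).
\]
For $j\le n-3$ this already lies in $\F_p[\B]$. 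For the top pair I must first rewrite the non-generator $u_{n-1}=F(u_{n-2})$ in terms of $\B$: substituting $x_n\mapsto x_k$ into \eqref{eq2.4} and using that $x_k\in\Span_{\F_p}(x_1,\dots,x_{n-1})$ forces the corresponding norm to vanish, one obtains $u_{n-1}=y_nN(x_n)-\sum_{i=1}^{n-1}(-1)^i d_{i,n-1}u_{n-1-i}$, and feeding this into the identity at $j=n-2$ produces the last relation inside $\F_p[\B]$. This explicit reduction of $u_{n-1}$ is the most delicate computation of this part.

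Since every t\^ete-\`a-t\^ete pair subducts into $\F_p[\B]$, the finite subduction criterion shows that $\B$ is a SAGBI basis of the subalgebra $A':=\F_p[\B]$. To extract the complete-intersection statement, let $S$ be the polynomial ring on abstract variables mapping onto $\B$ under $\phi\colon S\to A'$; the $n-1$ identities above furnish relations $\rho_0,\dots,\rho_{n-2}\in\ker\phi$. Weighting each variable that maps to a $u_j$ by $1$ and all remaining variables by $0$ makes the initial term of $\rho_j$ equal to the $p$-th power of the corresponding variable; these $n-1$ initial terms are pairwise coprime, so by Buchberger's criterion the $\rho_j$ form a Gr\"obner basis, hence a regular sequence, and $S/(\rho_0,\dots,\rho_{n-2})$ is a complete intersection of dimension $2n$. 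Its Hilbert series equals that of $S$ modulo the coprime monomials, which by the SAGBI property also equals $\mathrm{HS}(A')$; as $(\rho_0,\dots,\rho_{n-2})\subseteq\ker\phi$, the two Hilbert series agree and $\ker\phi=(\rho_0,\dots,\rho_{n-2})$. Thus $A'$ is a complete intersection on $\B$ with exactly these $n-1$ relations.

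The remaining and genuinely hard step is the generation statement $A'=\F_p[V\oplus V^*]^G$, equivalently the reverse inclusion $\LM(\F_p[V\oplus V^*]^G)\subseteq\ideal{\LM(\B)}$; the opposite inclusion is automatic from $\B\subseteq\F_p[V\oplus V^*]^G$. The difficulty is that in the modular setting no Molien formula is available to match Hilbert functions for free. I would prove it by a normal-form argument carried out directly on leading monomials: given an invariant $f$, the conditions $(g_i-1)f=0$ translate, through the triangular action \eqref{eq2.1}--\eqref{eq2.2}, into divisibility constraints forcing $\LM(f)\in\ideal{\LM(\B)}$, after which one subtracts the matching product of elements of $\B$ and induces on the order. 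An alternative is to verify that the explicit complete intersection $A'$ is normal via Serre's criterion and that $\operatorname{Frac}(A')$ is the full invariant field, so that $A'=\ol{A'}\supseteq\F_p[V\oplus V^*]^G\supseteq A'$. Either way, this control of the leading monomials of arbitrary invariants is where I expect the main obstacle to lie.
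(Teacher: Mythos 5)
Your overall architecture --- identify the leading monomials, show the only t\^ete-\`a-t\^etes are $u_j^p \leftrightarrow y_n^{p-1}u_{j+1}$ and $u_{n-2}^p \leftrightarrow y_n^p N(x_n)$, subduct them via the Frobenius identities $u_j^p - y_n^{p-1}u_{j+1} = \sum_k x_k^{p^{j+1}}N(y_k)$ and the Dickson-type expression for $u_{n-1}$, and then deduce that $A:=\F_p[\B]$ is a complete intersection --- matches the paper closely, and your explicit reduction of $u_{n-1}$ via $y_nN(x_n)-\sum_i(-1)^id_{i,n-1}u_{n-1-i}$ is exactly the content of the paper's relation $(R_{n-2})$ (which rests on the Chen--Wehlau identity $\sum_{i=0}^{n-1}(-1)^id_{i,n-1}\sum_j x_j^{p^{n-1-i}}y_j=0$; you should cite or prove that). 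Two smaller points: a ``short lattice computation'' is not by itself enough to conclude that your $n-1$ binomials generate the toric \emph{ideal} of relations among the leading monomials (lattice generation and ideal generation are not the same thing); the paper proves this by showing $X,Y,T_0,\dots,T_{n-2}$ is a regular sequence so that $S/(T_j)$ is a domain of the right dimension. Also, your weight vector (weight $1$ on the $U_j$, $0$ elsewhere) does not isolate $U_{n-2}^p$ as the initial term of the last relation, since the terms $d_{i,n-1}U_{n-2-i}^p$ have the same weight; you need a genuine monomial order with $U_{n-2}>\dots>U_0$, or the paper's regular-sequence argument modulo $(X_1,\dots,Y_1)$.

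The genuine gap is the final step, $A=\F_p[V\oplus V^*]^G$, which you explicitly defer. Your primary suggestion --- reading off $\LM(f)\in\ideal{\LM(\B)}$ for an arbitrary invariant $f$ directly from the equations $(g_i-1)f=0$ --- is not developed and there is no reason to expect it to work in this modular setting; controlling leading monomials of arbitrary invariants is precisely what one cannot do by hand here. Your ``alternative'' is the paper's actual route ($\F_p[V\oplus V^*]$ integral over $A$ via the hsop, $\Quot(A)=\F_p(V\oplus V^*)^G$, and $A$ normal), but you supply neither of its two nontrivial inputs. The fraction-field equality requires knowing that $\F_p(V\oplus V^*)^G$ is generated by $\F_p[V]^G$, $\F_p[V^*]^G$ and $u_0$ (Bonnaf\'e--Kemper, Proposition 1.1). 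The normality of $A$ is not a routine application of Serre's criterion to the complete-intersection presentation: the paper proves it by showing $A[1/y_n]=\F_p[V\oplus V^*]^G[1/y_n]$ is normal (quoting Chen--Tang) and that $y_n$ is a \emph{prime element} of $A$, the latter via a Cramer's-rule computation in $\ol{P}/(Y_n)$ whose coefficient matrix has determinant a $p$-th power of a Moore-type determinant in $X_1,\dots,X_{n-1}$. Without an argument of this kind your chain $A=\ol{A}\supseteq\F_p[V\oplus V^*]^G$ does not close, so the generation statement --- the heart of the theorem --- remains unproved in your proposal.
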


The following two low-dimensional examples illustrate this theorem.

\begin{exam}{\rm
Consider $n=2$ and $G=\ideal g$ is an elementary abelian $p$-group of rank $1$. Let $\{y_2, y_1\}$ be a basis of $V$ and $\{x_2, x_1\}$ be the basis for $V^*$ dual to $\{y_2, y_1\}$. The matrix of $g$ with respect to the basis $\{x_2, x_1, y_2, y_1\}$ of
$V\oplus V^*$ has the form
$$g=\begin{pmatrix}
1 & 0 & 0 & 0 \\
-1& 1 & 0 & 0 \\
0 & 0 & 1 & 1 \\
0 & 0 & 0 & 1
\end{pmatrix}.$$
Then $N(x_2)=x_2^p-x_1^{p-1}\cdot x_2$, $N(y_1)=y_1^p-y_2^{p-1}\cdot y_1$ and $u_0=x_1\cdot y_1+ x_2\cdot y_2$.
Theorem \ref{thm1} asserts that  $\F_p[V\oplus V^*]^G$ is a hypersurface,  generated by $\{x_1, N(x_2), y_2, N(y_1), u_0\}$, subject to
$$u_0^p-(x_1\cdot y_2)^{p-1}\cdot u_0-x_1^p\cdot N(y_1)-N(x_2)\cdot y_2^p=0,$$
the unique relation among these five generators.
\hbo}\end{exam}

\begin{exam}\label{exam103}
{\rm
Suppose that $n=3$ and $G$ is an elementary abelian $p$-group of rank $2$ with two generators $g_1$ and $ g_2$. Let $\{y_3, y_2, y_1\}$ be a basis for $V$ and $\{x_3, x_2, x_1\}$ be the basis of $V^*$ dual to $\{y_3, y_2, y_1\}$. The generators $g_1, g_2$ acting on $V\oplus V^*$ with respect to the basis $\{x_3, x_2, x_1, y_3, y_2, y_1\}$ have the matrix forms:
$$g_1=\dia\left\{\begin{pmatrix}
1 & 0 & 0 \\
-1& 1 & 0 \\
0 & 0 & 1
\end{pmatrix},\begin{pmatrix}
 1 & 1 & 0  \\
0 & 1 & 0  \\
0 & 0 & 1
\end{pmatrix}\right\},~~ g_2=\dia\left\{\begin{pmatrix}
1 & 0 & 0 \\
0& 1 & 0 \\
-1 & 0 & 1
\end{pmatrix},\begin{pmatrix}
 1 & 0& 1  \\
0 & 1 & 0  \\
0 & 0 & 1
\end{pmatrix}\right\}.$$
Note that
$$N(x_3)=x_3^{p^2}-d_{1, 2}\cdot x_3^p+ d_{2, 2}\cdot x_3,$$ where
$d_{1,2}$ and $d_{2,2}$ are the two-dimensional Dickson invariants in $x_1,x_2$ of degrees  $p^2-p$ and $p^2-1$, respectively; see, for example, \cite[Section 2]{CW17}.
Moreover,
\begin{eqnarray*}
N(y_1)=y_1^p-y_3^{p-1}\cdot y_1, && N(y_2)=y_2^p-y_3^{p-1}\cdot y_2,\\
u_0=x_1\cdot y_1+x_2\cdot y_2+x_3\cdot y_3, && u_1=x_1^p\cdot y_1+x_2^p\cdot y_2+x_3^p\cdot y_3.
\end{eqnarray*}
The invariant ring $\F_p[V\oplus V^*]^G$ is generated by $\{x_1, x_2, N(x_3), y_3, N(y_2), N(y_1),u_0,u_1\}$, subject to the following two relations:
\begin{eqnarray*}
&&u_0^p-u_1\cdot y_3^{p-1}-x_1^p\cdot N(y_1)-x_2^p\cdot N(y_2)=0,\\
&&u_1^p-N(x_3)\cdot y_3^p-d_{1, 2}\cdot u_0^p+d_{2, 2}\cdot u_0\cdot y_3^{p-1}-\sum_{i=1}^{2}(x_i^{p^2}-d_{1, 2}\cdot x_i^p)\cdot N(y_i)=0.
\end{eqnarray*}
Note that $d_{1,2}$ and $d_{2,2}$ are polynomials in $x_1,x_2$, and both $x_1,x_2$ are invariants.
\hbo}\end{exam}

\begin{rem}\label{rem1.4}{\rm
Recall in \cite{Chu03}  that for an $n$-dimensional modular representation $V$ of an elementary abelian $p$-group $G$, the two-vector invariant ring $\F_p[2V]^G$ is either a complete intersection or not a Cohen-Macaulay ring; see \cite[Corollary 4.5]{Chu03}. In particular, when $V$ is a cohyperplane representation of $G$,  $\F_p[2V]^G$ is a complete intersection if $n=2$ and not Cohen-Macaulay if $n>2$; see \cite[Example 4.6]{Chu03}. However,  \cite[Example 4.6]{Chu03} didn't
include an explicit generating set of $\F_p[2V]^G$ for a general $n>2$.

In fact, a Magma computation \cite{BCP97} indicates that finding a generating set of $\F_p[2V]^G$ ($n\geqslant 3$) should be more complicated than finding a generating set for $\F_p[V\oplus V^*]^G$.
For example, when $(n,p)=(3,2)$,  $\F_2[2V]^G$ requires a generating set of cardinality $12$ while Example \ref{exam103} shows that $\F_2[V\oplus V^*]^G$ can be generated by $8$ elements. 
\hbo}\end{rem}

We close this section by sketching the proof of Theorem \ref{thm1}. First of all, we find all relations among the elements of $\B$ in Section 2; Section 3 is devoted to proving Lemma \ref{lem3.1}, a key result that  describes all non-trivial $\tete$s over $\B$; Section 4 gives a detailed proof to Theorem \ref{thm1}, based on the following classical strategy for proving that a subalgebra $A$ of an invariant ring $k[W]^G$ is actually equal to $k[W]^G$ itself:
\begin{enumerate}
  \item $k[W]$ is integral over $A$;
  \item The invariant field $k(W)^G$ and the field of fractions of $A$ are equal;
  \item $A$ is normal.
\end{enumerate}
See, for example, \cite[Proposition 10.0.8]{CW11}.

\section{Relations}
\setcounter{equation}{0}
\renewcommand{\theequation}
{2.\arabic{equation}}
\setcounter{theorem}{0}
\renewcommand{\thetheorem}
{2.\arabic{theorem}}


\noindent This section is devoted to finding all the relations among the generating invariants of $\F_p[V\oplus V^*]^G$ in Theorem \ref{thm1}. The following relation (\ref{R0}) is key and initial, involving the invariants $u_0, u_1, y_n$ and $x_i, N(y_i)$ for all $i=1,2,\dots, n-1$.

\begin{lem} \label{lem201}
\begin{equation}
\label{R0}\tag{$R_0$}
u_0^p-u_{1}\cdot y_n^{p-1}-\sum_{i=1}^{n-1}x_i^p\cdot N(y_i)=0.
\end{equation}
\end{lem}
\begin{proof}
Combining (\ref{eq2.3}) and (\ref{eq2.5}), we have
\begin{eqnarray*}
u_0^p-u_{1}\cdot y_n^{p-1} &=& \left(\sum_{i=1}^nx_i^p\cdot y_i^p\right)- \left(\sum_{i=1}^nx_i^p\cdot y_i\cdot y_n^{p-1}\right)\\
&=&\sum_{i=1}^{n-1}x_i^p\cdot \left(y_i^p-y_i\cdot y_n^{p-1}\right)
\end{eqnarray*}
which rearranges to give $\sum\limits_{i=1}^{n-1}x_i^p\cdot N(y_i).$
\end{proof}

Suppose $F:\F_p[V\oplus V^*]\ra \F_p[V\oplus V^*]$ denotes the Frobenius homomorphism defined by
$x_i\mapsto x_i^p$ and $y_i\mapsto y_i$ for all $i=1,2, \ldots, n$. Then $u_j=F^{j}(u_0)$ for all $j\in\N^+$.
We may find further relations among these invariants $u_1, \dots, u_{n-2}, y_n$, and $x_i, N(y_i)$ for all $i=1,2,\dots, n-1$ via repeatedly applying the Frobenius homomorphism $F$.

\begin{coro} \label{coro202}
For $j=1,2,\dots,n-3$, we have
\begin{equation}
\label{Rj}\tag{$R_j$}
u_j^p-u_{j+1}\cdot y_n^{p-1}-\sum_{i=1}^{n-1}x_i^{p^{j+1}}\cdot N(y_i)=0.
\end{equation}
\end{coro}
\begin{proof}
Applying the Frobenius homomorphism $F$ to the relation (\ref{R0}) obtains $$F(u_0)^p-F(u_{1})\cdot F(y_n)^{p-1}-\sum_{i=1}^{n-1}F(x_i)^p\cdot F(N(y_i))=0,$$ which finalizes to
\begin{equation}
\label{R1}\tag{$R_1$}
u_1^p-u_{2}\cdot y_n^{p-1}-\sum_{i=1}^{n-1}x_i^{p^2}\cdot N(y_i)=0.
\end{equation}
Similarly, applying the Frobenius homomorphism $F$ to (\ref{R1}) can obtain the relation $(R_2)$, and in general, the relation
$(R_{j})$ can be obtained by applying $F$ to $(R_{j-1})$, for all $j=1,2,\dots,n-3$.
\end{proof}

To articulate the last relation among the invariants, we define
\begin{eqnarray*}
\Delta_1&:= & \sum_{i=1}^{n-2} (-1)^i\cdot d_{i,n-1}\cdot u_{n-2-i}^p+(-1)^{n-1}\cdot d_{n-1,n-1}\cdot  u_0\cdot y_n^{p-1} \\
\Delta_2&:=& \sum_{i=0}^{n-2} (-1)^i\cdot d_{i,n-1}\cdot \left(\sum_{j=1}^{n-1} x_j^{p^{n-1-i}}\cdot N(y_j)\right)
\end{eqnarray*}
where $d_{0,n-1}:=1$.

\begin{lem} \label{lem203}
We have the following relation:
\begin{equation}
\label{RS}\tag{$R_{n-2}$}
u_{n-2}^p-N(x_n)\cdot y_n^p+\Delta_1-\Delta_2=0.
\end{equation}
\end{lem}

\begin{proof} Note that
\begin{eqnarray*}
u_{n-2}^p-N(x_n)\cdot y_n^p&=&\left(\sum_{j=1}^nx_j^{p^{n-2}}\cdot y_j\right)^p-\left(x_n^{p^{n-1}}+\sum_{i=1}^{n-1} (-1)^i\cdot d_{i,n-1}\cdot x_n^{p^{n-1-i}}\right)\cdot y_n^p\\
&=&\sum_{j=1}^{n-1}x_j^{p^{n-1}}\cdot y_j^p-\sum_{i=1}^{n-1} (-1)^i\cdot d_{i,n-1}\cdot x_n^{p^{n-1-i}}\cdot y_n^p\\
&=&\sum_{j=1}^{n-1}x_j^{p^{n-1}}\cdot y_j^p-\sum_{i=1}^{n-2} (-1)^i\cdot d_{i,n-1}\cdot x_n^{p^{n-1-i}}\cdot y_n^p-(-1)^{n-1}\cdot d_{n-1,n-1}\cdot x_n\cdot y_n\cdot y_n^{p-1}\\
&=&\sum_{j=1}^{n-1}x_j^{p^{n-1}}\cdot y_j^p-\sum_{i=1}^{n-2} (-1)^i\cdot d_{i,n-1}\cdot \left(u_{n-2-i}^p-\sum_{j=1}^{n-1}x_j^{p^{n-1-i}}\cdot y_j^p\right)\\
&&-(-1)^{n-1}\cdot d_{n-1,n-1}\cdot \left(u_0-\sum_{j=1}^{n-1}x_j\cdot y_j\right)\cdot y_n^{p-1}\\%
&=&\sum_{i=0}^{n-2} (-1)^i\cdot d_{i,n-1}\cdot\sum_{j=1}^{n-1}x_j^{p^{n-1-i}}\cdot y_j^p-\sum_{i=1}^{n-2} (-1)^i\cdot d_{i,n-1}\cdot u_{n-2-i}^p\\
&&-(-1)^{n-1}\cdot d_{n-1,n-1}\cdot u_0\cdot y_n^{p-1}+(-1)^{n-1}\cdot d_{n-1,n-1}\cdot\sum_{j=1}^{n-1}x_j\cdot y_n^{p-1}\cdot y_j\\
&=&\sum_{i=0}^{n-2} (-1)^i\cdot d_{i,n-1}\cdot\sum_{j=1}^{n-1}x_j^{p^{n-1-i}}\cdot \left(N(y_j)+y_n^{p-1}\cdot y_j\right)-\sum_{i=1}^{n-2} (-1)^i\cdot d_{i,n-1}\cdot u_{n-2-i}^p\\
&&-(-1)^{n-1}\cdot d_{n-1,n-1}\cdot u_0\cdot y_n^{p-1}+(-1)^{n-1}\cdot d_{n-1,n-1}\cdot\sum_{j=1}^{n-1}x_j\cdot y_n^{p-1}\cdot y_j\\
&=&\Delta_2-\Delta_1+\sum_{i=0}^{n-1} (-1)^i\cdot d_{i,n-1}\cdot\sum_{j=1}^{n-1}x_j^{p^{n-1-i}}\cdot y_n^{p-1}\cdot y_j\\
&=&\Delta_2-\Delta_1+\left(\sum_{i=0}^{n-1} (-1)^i\cdot d_{i,n-1}\cdot\left(\sum_{j=1}^{n-1}x_j^{p^{n-1-i}}\cdot y_j\right)\right)\cdot y_n^{p-1}\\
&=&\Delta_2-\Delta_1.
\end{eqnarray*}
The last equation holds because
$$\sum_{i=0}^{n-1} (-1)^i\cdot d_{i,n-1}\cdot\left(\sum_{j=1}^{n-1}x_j^{p^{n-1-i}}\cdot y_j\right)=0$$
which have been proved by Chen-Wehlau \cite[Lemma 3]{CW17}  in variables $x_1,\dots,x_{n-1},y_1,\dots,y_{n-1}$.
Therefore, $u_{n-2}^p-N(x_n)\cdot y_n^p+\Delta_1-\Delta_2=0$, as desired.
\end{proof}

\section{Non-trivial T\^ete-\`a-T\^ete}
\setcounter{equation}{0}
\renewcommand{\theequation}
{3.\arabic{equation}}
\setcounter{theorem}{0}
\renewcommand{\thetheorem}
{3.\arabic{theorem}}

\noindent This section describes all non-trivial $\tete$s over $\B$. Let us take the grevlex ordering on $\F_p[V\oplus V^*]^G$ with $x_n>x_{n-1}>\cdots>x_1>y_n>y_{n-1}>\cdots>y_1$. Note that $\LT(N(x_n))=x_n^{p^{n-1}}, \LT(N(y_i))=-y_n^{p-1}\cdot y_i$, and $\LT(u_j)=x_n^{p^j}\cdot y_n$
for $1\leqslant i\leqslant n-1$ and $0\leqslant j\leqslant n-2$.

\begin{lem}\label{lem3.1}
All non-trivial $\tete$s over $\B$ are given by:
$$u_j^p-u_{j+1}\cdot y_n^{p-1}, 0\leqslant j\leqslant n-3 \text{ and } u_{n-2}^p-N(x_n)\cdot y_n^p.$$
\end{lem}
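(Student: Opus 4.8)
The plan is to translate the search for tête-à-têtes into a purely combinatorial question about the lattice of multiplicative relations among the leading monomials of $\B$, and then to solve that lattice problem explicitly using the base-$p$ structure of the exponents. Recall that the relevant leading terms are $\LT(x_i)=x_i$, $\LT(y_n)=y_n$, $\LT(N(y_i))=-y_n^{p-1}\cdot y_i$, $\LT(N(x_n))=x_n^{p^{n-1}}$, and $\LT(u_j)=x_n^{p^j}\cdot y_n$.

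First I would record the exponent vectors of all leading terms in the monomial lattice with coordinates indexed by $x_1,\dots,x_n,y_1,\dots,y_n$, and invoke a support argument. Each variable $x_i$ ($1\le i\le n-1$) occurs only in $\LT(x_i)$, and each variable $y_i$ ($1\le i\le n-1$) occurs (to the first power) only in $\LT(N(y_i))$. Hence in any monomial identity among the leading terms of $\B$, the multiplicities attached to the generators $x_1,\dots,x_{n-1},N(y_1),\dots,N(y_{n-1})$ must agree on the two sides and therefore cancel. Consequently these generators enter no non-trivial tête-à-tête, and the problem reduces to relations among $N(x_n),y_n,u_0,\dots,u_{n-2}$, whose leading terms $x_n^{p^{n-1}}$, $y_n$, and $x_n^{p^j}y_n$ involve only $x_n,y_n$. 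Writing such a relation with multiplicities $b$ on $N(x_n)$, $c$ on $y_n$, and $e_j$ on $u_j$, the $x_n$-coordinate yields $b\,p^{n-1}+\sum_{j=0}^{n-2}e_j\,p^{j}=0$ and the $y_n$-coordinate yields $c+\sum_{j=0}^{n-2}e_j=0$. The second equation determines $c$, so the relation lattice is identified with the kernel of the homomorphism $\phi\colon\Z^{n}\to\Z$ that sends the coordinate of $u_j$ to $p^{j}$ and the coordinate of $N(x_n)$ to $p^{n-1}$.

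The heart of the argument is to show that the $n-1$ integer vectors coming from the binomials $u_j^p-u_{j+1}\,y_n^{p-1}$ ($0\le j\le n-3$) and $u_{n-2}^p-N(x_n)\,y_n^p$ generate $\ker\phi$ over $\Z$, so that no further tête-à-tête exists. All of them lie in $\ker\phi$, since $p\cdot p^{j}-p^{j+1}=0$ and $p\cdot p^{n-2}-p^{n-1}=0$. To prove generation I would reduce modulo the sublattice they span: the congruences $p\,u_j\equiv u_{j+1}$ and $p\,u_{n-2}\equiv N(x_n)$ rewrite every coordinate as a power-of-$p$ multiple of the $u_0$-coordinate, so the quotient of $\Z^{n}$ by this sublattice is cyclic, generated by the class of $u_0$. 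Since $\phi$ sends $u_0$ to $1$, the quotient is infinite cyclic and torsion-free; the spanned sublattice therefore equals $\ker\phi$ exactly, with no finite-index gap. Reading each lattice generator back as a binomial and noting that its two sides have disjoint generator-support (namely $u_j^p$ against $u_{j+1}\,y_n^{p-1}$, and $u_{n-2}^p$ against $N(x_n)\,y_n^p$) confirms that these are genuine tête-à-têtes and that the list in the statement is complete.

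The main obstacle I anticipate is precisely this integrality and completeness step: it is routine to exhibit the listed relations and to check they are $\Z$-independent of the correct rank $n-1$, but ruling out additional indecomposable tête-à-têtes — equivalently, verifying that the listed binomials generate the full relation lattice rather than merely a finite-index sublattice — is where care is required. The base-$p$, carry-free structure of $\phi$ is exactly the feature that makes the reduction $u_{j+1}\equiv p\,u_j$ terminate cleanly at $u_0$, and it is this structure that I would lean on to close the gap.
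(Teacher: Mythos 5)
Your support argument reducing the problem to the leading terms of $N(x_n)$, $y_n$, $u_0,\dots,u_{n-2}$ is correct (and in fact justifies a step the paper only asserts), and your verification that the exponent vectors of the listed binomials generate the relation lattice $\ker\phi$ is also correct. The gap is in the final inference: generating the \emph{lattice} of exponent relations is strictly weaker than generating the \emph{toric ideal} $\ker(\varphi)$, where $\varphi:\F_p[X,Y,U_0,\dots,U_{n-2}]\to\F_p[x_n,y_n]$ sends each variable to the corresponding leading term --- and ideal generation is what the lemma must deliver for the SAGBI criterion invoked in Step~1 of the paper (``all non-trivial $\tete$s'' has to mean a set of binomials generating $\ker\varphi$). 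If $v_1,\dots,v_k$ is a $\Z$-basis of a lattice $L$, the binomials $t^{v_i^+}-t^{v_i^-}$ generate the lattice ideal $I_L$ only up to saturation by the product of the variables: composing lattice generators corresponds to multiplying \emph{and dividing} binomial relations by monomials, and the division step is not available at the ideal level. A standard counterexample is the monomial map $x\mapsto t^3$, $y\mapsto t^4$, $z\mapsto t^5$: the exponent vectors of $y^2-xz$ and $x^3-yz$ form a $\Z$-basis of the relation lattice, yet these two binomials generate a proper complete-intersection subideal of the toric ideal, which requires the third generator $z^2-x^2y$. So torsion-freeness of the quotient lattice does not close the gap you yourself flag as the crux.

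The paper closes it by a different mechanism: it shows that $\ol{S}:=S/(T_0,\dots,T_{n-2})$ is an integral domain (using that $X,Y,T_0,\dots,T_{n-2}$ is a regular sequence and that inverting $Y$ turns $\ol{S}$ into a localization of $\F_p[Y,U_0]$), and then compares Krull dimensions: $\ker(\ol{\varphi})$ is a prime of height $\dim(\ol{S})-\dim(B)=0$ in a domain, hence zero, so the $T_j$ generate all of $\ker(\varphi)$. Your argument would be repaired by an analogous supplement --- for instance, proving that the ideal generated by your binomials is prime, or that it is saturated with respect to $Y$ --- but as written it establishes only that the listed relations span the relation lattice without finite index, not that the list of $\tete$s is complete in the sense the theorem needs.
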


\begin{proof}
Finding all non-trivial $\tete$s over $\B$ is equivalent to finding all relations among the leading terms of elements of $\B$.
We first note that these relations only occur involving the leading terms of $N(x_n), y_n$, and $u_0,\dots,u_{n-2}$.

Let $S:=\F_p[X, Y, U_0, \ldots, U_{n-2}]$  be a polynomial algebra with variables $X, Y, U_0, \ldots, U_{n-2}$ and
$B$ denote the subalgebra of $\F_p[x_n,y_n]$ generated by the leading terms of $N(x_n), y_n$, and $u_0,\dots,u_{n-2}$.
Consider the standard homomorphism $\varphi:S\ra B$
defined by
$$X\mapsto \LT(N(x_n)), Y\mapsto\LT(y_n), U_j\mapsto\LT(u_j)$$
for all $j=0,1,\dots,n-2.$ We observe that
$$T_j:=U_j^p-U_{j+1}\cdot Y^{p-1}\in \ker(\varphi)$$ for all $0\leqslant j\leqslant n-3$, and furthermore,
$$T_{n-2}:=U_{n-2}^p-X\cdot Y^p\in \ker(\varphi).$$
In fact, $\varphi(U_j^p-U_{j+1}\cdot Y^{p-1}) = \varphi(U_j)^p-\varphi(U_{j+1})\cdot\varphi(Y)^{p-1} = \LT(u_j)^p- \LT(u_{j+1})\cdot\LT(y_n)^{p-1} =(x_n^{p^j}\cdot y_n)^p-x_n^{p^{j+1}}\cdot y_n\cdot y_n^{p-1}=0$, and
$\varphi(U_{n-2}^p-X\cdot Y^p)=\LT(u_{n-2})^p-\LT(N(x_n))\cdot \LT(y_n)^p=(x_n^{p^{n-2}}\cdot y_n)^p-x_n^{p^{n-1}}\cdot y_n^p=0$.

We \textbf{claim} that $\ker(\varphi)$ is generated by $\{T_0,T_1,\dots,T_{n-2}\}$.
To prove this claim, we consider the quotient algebra $\ol{S}:=S/(T_0,T_1,\dots,T_{n-2})$
and the standard surjective homomorphism
$$\ol{\varphi}:\ol{S}\ra B.$$
The key fact we observe is that $\ol{S}$ is an integral domain. In fact,  we first note that
$$ T_i\equiv U_i^p \mod (Y)$$
for $i=0,1,\dots,n-2$. As $S$ is an integral domain, $X, Y, U_0,U_1,\dots,U_{n-2}$ is a regular sequence in $S$. Thus
$X, Y, U_0^p,U_1^p,\dots,U_{n-2}^p$ is a regular sequence in $S$. Therefore, $X, Y, T_0,T_1,\dots,T_{n-2}$ is  also a regular sequence in $S$. This means that $Y$ is not a zerodivisor in $\ol{S}$ and $\ol{S}$ can be embedded into $\ol{S}[Y^{-1}]$. Moreover, in $\ol{S}[Y^{-1}]$, the relations $T_0,T_1,\dots,T_{n-2}$ imply that $U_1,U_2,\dots,U_{n-2},X$ can be expressed in terms of $U_0$ and $Y^{-1}$. This implies $\ol{S}[Y^{-1}]\cong \F_p[Y,U_0][Y^{-1}]$, which is an integral domain. Therefore, the quotient ring $\ol{S}$, as a subring of $\ol{S}[Y^{-1}]$, is an integral domain.

Now we need to show that the map $\ol{\varphi}$ is an isomorphism. As $\ol{S}$ is integral domain and
$B\cong \ol{S}/\ker(\ol{\varphi})$,  the height of $\ker(\ol{\varphi})$ is equal to $\dim(\ol{S}) - \dim(B) = 2-2= 0$. On the other hand, note that $B\subseteq \F_p[x_n, y_n]$ is an integral domain, thus $\ker(\ol{\varphi})$ is a prime ideal of $\ol{S}$. Since $\{0\} \subseteq \ker(\ol{\varphi})$ and $\{0\}$ is a prime ideal in $\ol{S}$, we see that
$\ker(\ol{\varphi}) = 0$, i.e., $\ol{\varphi}$ is injective. Therefore, $\ol{\varphi}$ is an isomorphism and the claim holds.

This also shows that all non-trivial $\tete$s over $\B$ are given by $u_j^p-u_{j+1}\cdot y_n^{p-1}$ and $u_{n-2}^p-N(x_n)\cdot y_n^p$, where $0\leqslant j\leqslant n-3$.
\end{proof}

\section{Proof of Theorem \ref{thm1}}
\setcounter{equation}{0}
\renewcommand{\theequation}
{4.\arabic{equation}}
\setcounter{theorem}{0}
\renewcommand{\thetheorem}
{4.\arabic{theorem}}

\noindent To complete the proof of Theorem \ref{thm1}, we let $A$ be the $\F_p$-subalgebra of $\F_p[V\oplus V^*]$ generated by the set $\B$. Clearly, $A\subseteq \F_p[V\oplus V^*]^G$. Our argument will be separated into the following five steps.

\textbf{Step 1}. We \textit{claim} that $\B$ is a SAGBI basis for $A$. To prove this claim, it suffices to show that every non-trivial \tete over $\B$ subducts to zero.
We have shown in Lemma \ref{lem3.1} that $u_j^p-u_{j+1}\cdot y_n^{p-1} (0\leqslant j\leqslant n-3) \text{ and } u_{n-2}^p-N(x_n)\cdot y_n^p$ give all the non-trivial $\tete$s over $\B$. Thus, we only need to show that these $\tete$ differences
$$u_j^p-u_{j+1}\cdot y_n^{p-1} (0\leqslant j\leqslant n-3) \text{ and } u_{n-2}^p-N(x_n)\cdot y_n^p$$
subduct to zero. In fact, the relations (\ref{Rj}) $(0\leqslant j\leqslant n-3)$ in Lemma \ref{lem201}
and Corollary \ref{coro202} imply that each $u_j^p-u_{j+1}\cdot y_n^{p-1}$ subducts to zero.
The relation (\ref{RS}) in Lemma \ref{lem203} shows that $u_{n-2}^p-N(x_n)\cdot y_n^p$ also subducts to zero.
Therefore, $\B$ is a SAGBI basis for $A$.

\textbf{Step 2}. We \textit{claim} that $A$ and $\F_p[V\oplus V^*]^G$ have the same field of fractions.
Since $A$ is a subalgebra of $\F_p[V\oplus V^*]^G$ and the field  of fractions of  $\F_p[V\oplus V^*]^G$ is
equal to the invariant field $\F_p(V\oplus V^*)^G$, it suffices to show that $\F_p(V\oplus V^*)^G$ is contained in $\Quot(A)$, the field of fractions of $A$. As $A$ has Krull dimension $2n$ and the relations (\ref{R0}), (\ref{Rj}) for all $j=1,2,\dots, n-3$ and (\ref{RS}) imply that
$$\Quot(A)=\F_p(x_1,\dots,x_{n-1}, y_n,N(y_{n-1}),\dots, N(y_1),u_0).$$
On the other hand, \cite[Proposition 1.1]{BK11} shows that $\F_p(V\oplus V^*)^G$, as a field extension over $\F_p$, is generated by $\F_p[V]^G$, $\F_p[V^*]^G$, and $u_0$. Note that $\F_p[V]^G=\F_p[x_1,\dots,x_{n-1},N(x_n)]$ and $\F_p[V^*]^G=\F_p[y_n,N(y_{n-1}),\dots, N(y_1)]$. Thus
$$\F_p(V\oplus V^*)^G=\F_p\left(x_1,\dots,x_{n-1}, N(x_n),y_n,N(y_{n-1}),\dots, N(y_1), u_0\right)\subseteq\Quot(A).$$
Here, note that $N(x_n)$ belongs to $\Quot(A)$ by the relation (\ref{RS}).

\textbf{Step 3}. We observe that $A$ contains an hsop $\{x_1,\dots,x_{n-1},N(x_n), y_n,N(y_{n-1}),\dots, N(y_1)\}$ for $\F_p[V\oplus V^*]$, i.e., $\F_p[V\oplus V^*]$ is an integral extension over $A$.

\textbf{Step 4}. We \textit{claim} that $A$ is a complete intersection. Let $P: =\F_p[X_1,\dots,X_n,Y_n,\dots,Y_1,U_0,\dots, U_{n-2}]$. Consider the standard homomorphism $\rho$ of $\F_p$-algebras $\rho: P\ra A$ defined by sending
$$X_j,X_n,Y_n,Y_j,U_0,\dots, U_{n-2} \textrm{ to } x_j,N(x_n),y_n,\\ N(y_j),u_0,\dots, u_{n-2}$$ for all $j=1,2,\dots,n-1$, respectively. Note that the following polynomials
\begin{eqnarray*}
\R_0&:=&U_0^p-U_{1}\cdot Y_n^{p-1}-\sum_{i=1}^{n-1}X_i^p\cdot Y_i;\\
\R_1&:=&U_1^p-U_{2}\cdot Y_n^{p-1}-\sum_{i=1}^{n-1}X_i^{p^{2}}\cdot Y_i;\\
\vdots\\
\R_{n-3}&:=&U_{n-3}^p-U_{n-2}\cdot Y_n^{p-1}-\sum_{i=1}^{n-1}X_i^{p^{n-2}}\cdot Y_i;\\
\R_{n-2}&:=&U_{n-2}^p-X_n\cdot Y_n^p+\sum_{i=1}^{n-2} (-1)^i\cdot d_{i,n-1}\cdot U_{n-2-i}^p+(-1)^{n-1}\cdot d_{n-1,n-1}\cdot U_0\cdot Y_n^{p-1}\\
&&-\sum_{i=0}^{n-2} (-1)^i\cdot d_{i,n-1}\cdot \left(\sum_{j=1}^{n-1} X_j^{p^{n-1-i}}\cdot Y_j\right)
\end{eqnarray*}
are the homogeneous elements of $\ker(\rho)$.

Let $\ol{P}:=P/(\R_{0},\dots,\R_{n-2})$. Then there exists a standard surjection $$\ol{\rho}: \ol{P}\ra A$$ induced from $\rho$.
First of all, we \textit{claim} that $\ol{\rho}$ is an isomorphism and thus $A$ is a complete intersection.

Note that for all $j=0,\dots, n-2$, we have
$$\R_{j}\equiv U_{j}^p \mod (X_1,\dots,X_n,Y_n,\dots,Y_1).$$
Since $P$ is a polynomial algebra, $X_1,\dots,X_n,Y_n,\dots,Y_1,U_0,\dots, U_{n-2}$ is a regular sequence in $P$, as is $X_1,\dots,X_n,Y_n,\dots,Y_1,U_0^p,\dots, U_{n-2}^p$. Thus $$X_1,\dots,X_n,Y_n,\dots,Y_1,\R_0,\dots, \R_{n-2}$$ is also a regular sequence in $P$. Hence, $Y_n$ is not a zerodivisor in $\ol{P}$ and $\ol{P}$ can be embedded into $\ol{P}\left[\frac{1}{Y_n}\right]$. Working in $\ol{P}$, the images of $\R_{0},\dots,\R_{n-2}$ are zero.  This means that the images of $X_n, U_1,\dots,U_{n-2}$ can be expressed by the images of $X_1,\dots,X_{n-1},Y_n,\dots,Y_1,U_0$ and $\frac{1}{Y_n}$. Thus,
$$\ol{P}\left[\frac{1}{Y_n}\right]\cong \F_p[X_1,\dots,X_{n-1},Y_n,\dots,Y_1,U_0]\left[\frac{1}{Y_n}\right].$$
The latter is a localization of a polynomial ring and so it is an integral domain. Therefore, the ring $\ol{P}$, as a subring of $\ol{P}\left[\frac{1}{Y_n}\right]$, is an integral domain as well. Moreover, $\ker(\ol{\rho})$ is a prime ideal in $\ol{P}$. Thus the height of $\ker(\ol{\rho})$ equals to $\dim(\ol{P})-\dim(A)=0$; see, for example, \cite[Theorem 1.8A]{Har77}. Note that $\{0\}\subseteq \ker(\ol{\rho})$ and $\{0\}$ is a prime ideal in $\ol{P}$, we have $\ker(\ol{\rho})=\{0\}$, i.e., $\ol{\rho}$ is injective. Therefore, $\ol{\rho}$ is an isomorphism and $A$ is a complete intersection.

\textbf{Step 5}.  Finally, we \textit{claim} that $A$ is normal. Recall that every factorial domain (i.e., UFD) is normal, by Nagata's Lemma (\cite[Lemma 19.20]{Eis95}), we would like to show that a localization of $A$ at a prime element is factorial.
By \cite[Proposition 3.1]{CT19}, we see that
$$A\left[\frac{1}{y_n}\right]=\F_p[V\oplus V^*]^G\left[\frac{1}{y_n}\right]$$ is a normal domain. Thus, to show that $A$ is normal, it is sufficient to show that $y_n$ is a prime element of $A$. By Step 4, it suffices to show that $Y_n$ is a prime element of $\ol{P}$.
Note that
\begin{eqnarray*}
A/ (y_n)\cong \ol{P}/(Y_n)\cong \F_p[X_1,\dots,X_n,Y_{n-1},\dots, Y_1,U_0,\dots,U_{n-2}]/(\wt{\R}_0,\wt{\R}_1,\dots,\wt{\R}_{n-2}),
\end{eqnarray*}
where $\wt{\R}_j$ denote the images  of $\R_j$ in $P/(Y_n)$ for all $j=0,1,\dots, n-2$.

Working in  $\ol{P}/(Y_n)$, the relations $\wt{\R}_j=0$ can be written as the following linear system:
$$
\scriptsize\begin{pmatrix}
X_1^p & X_2^p & \cdots & X_{n-1}^p \\
X_1^{p^2} & X_2^{p^2} & \cdots & X_{n-1}^{p^2} \\
\vdots & \vdots & \cdots & \vdots \\
\sum\limits_{i=0}^{n-2}(-1)^id_{i, n-1}X_1^{p^{n-1-i}} & \sum\limits_{i=0}^{n-2}(-1)^id_{i, n-1}X_{2}^{p^{n-1-i}} & \cdots & \sum\limits_{i=0}^{n-2}(-1)^id_{i,n-1}X_{n-1}^{p^{n-1-i}}
\end{pmatrix}\cdot\begin{pmatrix}
Y_1 \\
Y_2 \\
\vdots \\
Y_{n-1}
\end{pmatrix}=\begin{pmatrix}
U_0 \\
U_1^p \\
\vdots \\
U_{n-2}+\sum\limits_{i=1}^{n-2}(-1)^{i}d_{i,n-1}U_{n-2-i}^p
\end{pmatrix}.
$$
Let $\Delta$ denote the coefficient matrix of the above system. Applying elementary row transformations to $\Delta$ (in fact, multiplying the $i$-th row  with $(-1)^{n-i}d_{n-1-i, n-1}$ and adding them to the last row, where $i=1,2,\dots, n-2$), we obtain a new matrix $\Delta'$ which has the same determinant as $\Delta$. Note that $d_{0, n-1}=1$, thus
$$\det(\Delta)=\det(\Delta')=\det\begin{pmatrix}
X_1 & X_2 & \cdots & X_{n-1} \\
X_1^{p} & X_2^{p} & \cdots & X_{n-1}^{p} \\
\vdots & \vdots & \cdots & \vdots \\
X_1^{p^{n-2}} & X_{2}^{p^{n-2}} & \cdots & X_{n-1}^{p^{n-2}}
\end{pmatrix}^p.$$
Hence, $\R_0,\dots, \R_{n-2}, \det(\Delta)$ is a regular sequence in $P$. This means that $\det(\Delta)$ is a nonzero-divisor in $\ol{P}$ and $\ol{P}/(Y_n)$ can be embedded into $\ol{P}[\det(\Delta)^{-1}]/(Y_n)$, and it suffices to show that the latter is an integral domain. Let $\Delta_j$ denote the matrix obtained from $\Delta$ by replacing the $j$-th column with
$(U_0, -U_1^p,\dots, U_{n-2}+\sum\limits_{i=1}^{n-2}(-1)^{i}\cdot d_{i,n-1}\cdot U_{n-2-i}^p)^T$. By the Cramer's rule, we may express $Y_1,Y_2,\dots, Y_{n-1}$ in terms of $X_1,\dots, X_n, U_0, \dots, U_{n-2}$ and $\det(\Delta)^{-1}$, that is, $$Y_j=\frac{\det(\Delta_j)}{\det(\Delta)}.$$ Hence, $\ol{P}[\det(\Delta)^{-1}]/(Y_n)\cong \F_p[X_1,\dots, X_n, U_0, \dots, U_{n-2}][\det(\Delta)^{-1}]$, which is an integral domain. Therefore, $\ol{P}/(Y_n)$, as a subring of $\ol{P}[\det(\Delta)^{-1}]/(Y_n)$, is also an integral domain. This proves that $A$ is normal.

\section*{Acknowledgments}
\noindent The author would like to thank her Ph.D advisor Professor Yin Chen for his supervision and help. She also wants to thank Professors H. Eddy A. Campbell, Xianhui Fu, and David L. Wehlau for their careful reading, comments, and encouragements.  Many thanks go to the anonymous referee for his/her valuable comments and suggestions on the first version of this paper. 
The symbolic computation language MAGMA \cite{BCP97} (http://magma.maths.usyd.edu.au/) was very helpful.

\begin{bibdiv}
  \begin{biblist}

\bib{BCP97}{article}{
   author={Bosma, Wieb},
   author={Cannon, John},
   author={Playoust, Catherine},
   title={The Magma algebra system. I. The user language},
   journal={J. Symbolic Comput.},
   volume={24},
   date={1997},
   number={3-4},
   pages={235--265},
   issn={0747-7171},
}

\bib{BK11}{article}{
   author={Bonnaf\'e, C\'edric},
   author={Kemper, Gregor},
   title={Some complete intersection symplectic quotients in positive
   characteristic: invariants of a vector and a covector},
   journal={J. Algebra},
   volume={335},
   date={2011},
   pages={96--112},
   issn={0021-8693},
}

\bib{CSW10}{article}{
   author={Campbell, H. Eddy A.},
   author={Shank, R. James},
   author={Wehlau, David L.},
   title={Vector invariants for the two-dimensional modular representation
   of a cyclic group of prime order},
   journal={Adv. Math.},
   volume={225},
   date={2010},
   number={2},
   pages={1069--1094},
   issn={0001-8708},
}

\bib{CSW13}{article}{
   author={Campbell, H. Eddy A.},
   author={Shank, R. James},
   author={Wehlau, David L.},
   title={Rings of invariants for modular representations of elementary
   abelian $p$-groups},
   journal={Transform. Groups},
   volume={18},
   date={2013},
   number={1},
   pages={1--22},
   issn={1083-4362},
}

\bib{CW11}{book}{
   author={Campbell, H. Eddy A.},
   author={Wehlau, David L.},
   title={Modular invariant theory},
   series={Encyclopaedia of Mathematical Sciences},
   volume={139},
   publisher={Springer-Verlag, Berlin},
   date={2011},
   pages={xiv+233},
   isbn={978-3-642-17403-2},
}

\bib{Che14}{article}{
   author={Chen, Yin},
   title={On modular invariants of a vector and a covector},
   journal={Manuscripta Math.},
   volume={144},
   date={2014},
   number={3-4},
   pages={341--348},
}

\bib{Che21}{article}{
   author={Chen, Yin},
   title={Relations between modular invariants of a vector and a covector in
   dimension two},
   journal={Canad. Math. Bull.},
   volume={64},
   date={2021},
   number={4},
   pages={820--827},
   issn={0008-4395},
}

\bib{CT19}{article}{
   author={Chen, Yin},
   author={Tang, Zhongming},
   title={Vector invariant fields of finite classical groups},
   journal={J. Algebra},
   volume={534},
   date={2019},
   pages={129--144},
   issn={0021-8693},
}

\bib{CW17}{article}{
   author={Chen, Yin},
   author={Wehlau, David L.},
   title={Modular invariants of a vector and a covector: a proof of a
   conjecture of Bonnaf\'{e} and Kemper},
   journal={J. Algebra},
   volume={472},
   date={2017},
   pages={195--213},
   issn={0021-8693},
}

\bib{CW19}{article}{
   author={Chen, Yin},
   author={Wehlau, David L.},
   title={On invariant fields of vectors and covectors},
   journal={J. Pure Appl. Algebra},
   volume={223},
   date={2019},
   number={5},
   pages={2246--2257},
   issn={0022-4049},
}

\bib{Chu03}{article}{
   author={Chuai, Jianjun},
   title={Two-dimensional vector invariant rings of abelian $p$-groups},
   journal={J. Algebra},
   volume={266},
   date={2003},
   number={1},
   pages={362--373},
   issn={0021-8693},
}

\bib{DK15}{book}{
   author={Derksen, Harm},
   author={Kemper, Gregor},
   title={Computational invariant theory},
   series={Encyclopaedia of Mathematical Sciences},
   volume={130},
   edition={Second enlarged edition},
   publisher={Springer, Heidelberg},
   date={2015},
   pages={xxii+366},
   isbn={978-3-662-48420-3},
   isbn={978-3-662-48422-7},
}

\bib{Dic11}{article}{
   author={Dickson, Leonard E.},
   title={A fundamental system of invariants of the general modular linear
   group with a solution of the form problem},
   journal={Trans. Amer. Math. Soc.},
   volume={12},
   date={1911},
   number={1},
   pages={75--98},
}

\bib{Eis95}{book}{
   author={Eisenbud, David},
   title={Commutative algebra},
   series={Graduate Texts in Mathematics},
   volume={150},
   note={With a view toward algebraic geometry},
   publisher={Springer-Verlag, New York},
   date={1995},
   pages={xvi+785},
   isbn={0-387-94268-8},
   isbn={0-387-94269-6},
}

\bib{FS16}{article}{
   author={Fleischmann, Peter},
   author={Shank, James},
   title={The invariant theory of finite groups},
   conference={
      title={Algebra, logic and combinatorics},
   },
   book={
      series={LTCC Adv. Math. Ser.},
      volume={3},
      publisher={World Sci. Publ., Hackensack, NJ},
   },
   date={2016},
   pages={105--138},
}

\bib{Har77}{book}{
   author={Hartshorne, Robin},
   title={Algebraic geometry},
   series={Graduate Texts in Mathematics, No. 52},
   publisher={Springer-Verlag, New York-Heidelberg},
   date={1977},
   pages={xvi+496},
   isbn={0-387-90244-9},
}

\bib{Kem96}{article}{
   author={Kemper, Gregor},
   title={Calculating invariant rings of finite groups over arbitrary
   fields},
   journal={J. Symbolic Comput.},
   volume={21},
   date={1996},
   number={3},
   pages={351--366},
}

\bib{Weh13}{article}{
   author={Wehlau, David L.},
   title={Invariants for the modular cyclic group of prime order via
   classical invariant theory},
   journal={J. Eur. Math. Soc.},
   volume={15},
   date={2013},
   number={3},
   pages={775--803},
   issn={1435-9855},
}

  \end{biblist}
\end{bibdiv}
\raggedright
\end{document}